\newcommand{\R}{{\mathbb R}}
\newcommand\LL[1]{\multicolumn{1}{|c}{#1}}
\newcommand{\bpic}[4]{\beginpicture
  \setcoordinatesystem units  <1pt,1pt>
  \setplotarea x from #1 to #2, y from #3 to #4}
\newcommand{\epic}{\endpicture}
\newcommand{\hl}[3]{\put{\line(1,0){#1}} [Bl] at #2 #3 }
\newcommand{\vl}[3]{\put{\line(0,1){#1}} [Bl] at #2 #3 }
\newcommand{\bull}[2]{\put{$\bullet$} at #1 #2 }
\newtheorem{lemma}{Lemma}%[section]
\newtheorem{proposition}{Proposition}%[section]
\newtheorem{theorem}{Theorem}
\newtheorem{example}{Example}
\newtheorem{definition}{Definition}
\newcommand{\cd}{{\mathcal D}}
\newcommand{\cl}{{\mathcal L}}
\newcommand{\cc}{{\mathcal C}}
\def\row#1#2{{#1}_1,\ldots ,{#1}_{#2}}
\title{A combinatorial representation of Arrow's single-peaked domains}
\author{Arkadii Slinko}
\date{}
\begin{document}

\maketitle

\begin{abstract}
The most studied class of Condorcet domains (acyclic sets of linear orders) is the class of peak-pit domains of maximal width. It has a number of combinatorial representations by such familiar combinatorial objects like rhombus tilings and arrangements of pseudolines. Arrow's single-peaked domains are peak-pit but do not have maximal width. We suggest how to represent them by means of generalised arrangements of pseudolines.
\end{abstract}

\section{Introduction} 

Condorcet domain is a set of linear orders on a finite set of alternatives which do not lead to intransitivity if any multiset of these orders is aggregated using the majority voting. The theory of Condorcet domains is an old subject which has recently gained a new momentum summarised recently in \cite{puppe2024maximal}. All background information on Condorcet domains can be found in that article so we will not recap it here.\par\medskip

 An {\em arrangement of pseudolines}  is a family of pseudolines with the property that each pair of pseudolines has a unique point of intersection. An arrangement is {\em simple} \index{arrangement of pseudolines!simple} if no two distinct pairs of pseudolines intersect at the same point. In this paper the term {\it arrangement} will always denote a simple arrangement of pseudolines\footnote{more precisely simple labeled arrangement of pseudolines}. The {\it size} of an arrangement is the number of its pseudolines. Given an arrangement ${\bf a}$ of size $n$ we label its pseudolines $\row Ln$ with elements of $[n]$ so that they cross a vertical line  to the left of all intersections in increasing order from top to bottom and write ${\bf a}=\{\row Ln\}$. This vertical line will be denoted as $L$. Then the vertical line $R$ to the right of all intersections will be crossed in the reverse order (as every two lines must cross exactly once), that is in increasing order from bottom to top. The points of intersection of pseudolines are called {\em vertices}. The strip between the two lines $L$ and $R$ will be called {\em $LR$-strip} \index{LR-strip} and denoted by $E$. It is  endowed with the topology induced from $\R^2$. From now on a pseudoline is a piece of the graph of a continuous function connecting a point on $L$ with a point on $R$.

An arrangement ${\bf a}=\{\row Ln\}$ can be viewed as a cell complex $\cc({\bf a})$ with cells of dimensions $0$, $1$, or $2$, namely, the vertices, edges, and faces of the arrangement. The vertices are $L_i\cap L_j$ for $i\ne j$, the edges are connected components of $L_i\setminus \bigcup_{j\ne i} L_j$, $i=1,2,\ldots,n$, and the faces are connected components of $E\setminus \bigcup_{i=1}^n L_i$. Thus  $\cc({\bf a})=\bigcup_{i=0}^2\cc_i({\bf a})$, where $\cc_i({\bf a})$ is the set of cells of dimension $i$. The faces of this complex belonging to $\cc_2({\bf a})$ are usually called {\it chambers}. \index{chamber!} %The cells of an arrangement carry a natural lattice structure. %Adding a 0 and a 1 element we obtain the face lattice of the arrangement. 

The chambers are labeled by subsets of $[n]$, namely, a chamber $C$ gets label $\Lambda(C)=\{\row ik\}$ if lines $L_{i_1},\ldots, L_{i_k}$ and only they go above this chamber. These labels will be also called {\em chamber sets}. \index{chamber!set} Instead of writing labels as sets we will write them as strings, e.g., the aforementioned label will be written as $i_1i_2\ldots i_k$.  

\begin{definition}
We call a sequence $F=(F_0,F_1,\ldots,F_n)$ of subsets of $[n]$ a {\em flag} belonging to ${\bf a}$ if elements of $[n]$ can be ordered in a sequence $\row an$ so that $F_k=\{a_1,\ldots, a_k\}$. The linear order $a_1a_2\ldots a_n \in \cl([n])$ is said to {\em correspond} to the flag $F$.\index{linear order!corresponding to a flag}
\end{definition}

Thus, every arrangement of pseudolines ${\bf a}$ gives rise to the domain $\cd({\bf a})$ which consists of linear orders corresponding to all flags 
in ${\bf a}$. For example, the arrangement of pseudolines on Figure~\ref{fig:3-example} represents Condorcet domain $\cd_{3,1}=\{123, 213, 231, 321\}$.
\begin{figure}[H]
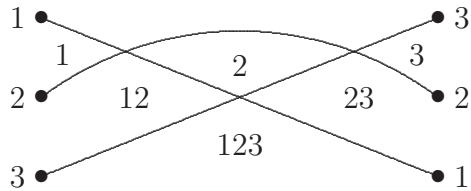

\centering
$$\bpic{-10}{160}{0}{60}
\bull{0}{0}
\put{$1$} at -9 60
\bull{0}{30}
\put{$2$} at -9 30
\bull{0}{60}
\put{$3$} at -9 0
\bull{150}{0}
\put{$1$} at 159 0
\bull{150}{30}
\put{$2$} at 159 30
\bull{150}{60}
\put{$3$} at 159 60
\circulararc 74 degrees from 150 30 center at 75 -70
\setlinear 
\plot 0 60 150 0 /
\plot 0 0 150 60 /
\put{$1$} at 8 46
\put{$3$} at 142 46
\put{$2$} at 75 42
\put{$12$} at 35 30
\put{$23$} at 120 30
\put{$123$} at 75 12
\epic$$
\caption{\label{fig:3-example} Pseudolines representation of domain $\cd_{3,1}$.}
\end{figure}

It appeared very beneficial to represent arrangements of pseudolines by the so-called {\em wiring diagrams} \citep{goodman1980proof} in which pseudolines are horizontal except the neighbourhood of each crossing. For example, the arrangement on Figure~\ref{fig:3-example} will be drawn as follows.

\begin{figure}[H]
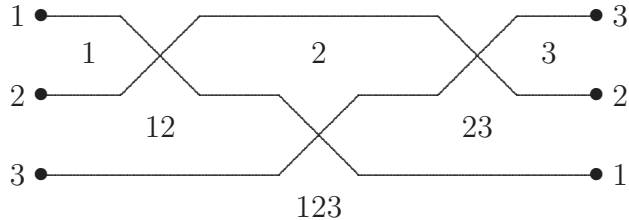

\centering
$$\bpic{-10}{160}{0}{60}
\bull{0}{0}
\put{$1$} at -9 60
\bull{0}{30}
\put{$2$} at -9 30
\bull{0}{60}
\put{$3$} at -9 0
\bull{210}{0}
\put{$1$} at 219 0
\bull{210}{30}
\put{$2$} at 219 30
\bull{210}{60}
\put{$3$} at 219 60
%\circulararc 74 degrees from 150 30 center at 75 -70
\setlinear 
\plot 0 60 30 60 60 30 90 30 120 0 210 0 /
%\plot 0 0 30 0 60 30 90 30 120 60 150 60 /
\plot 0 30 30 30 60 60 150 60 180 30 210 30 /
\plot 0 0 90 0 120 30 150 30 180 60 210 60 /
\put{$1$} at 18 46
\put{$3$} at 192 46
\put{$2$} at 105 46
\put{$12$} at 45 18
\put{$23$} at 165 18
\put{$123$} at 105 -12
\epic$$
\caption{\label{fig:3-example} Wiring diagram for domain $\cd_{3,1}$.}
\end{figure}
Intersections of pseudolines are called {\em vertices}.\par\medskip

Arrangements of pseudolines play a significant role in the theory of Condorcet domains.

\begin{theorem}
\label{thm:main-thm}
A domain $\cd$ is a peak-pit maximal Condorcet domain with maximal width if and only if it can be represented as $\cd({\bf a})$ for an arrangement of pseudolines ${\bf a}$.
\end{theorem}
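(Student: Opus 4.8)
The plan is to prove the two implications separately, and to treat the ``if'' direction first since it assembles the required properties of $\cd({\bf a})$ one at a time. For \emph{maximal width} I would note that reading the chamber sets straight up the left line $L$ produces the nested sequence $\emptyset\subset\{n\}\subset\{n,n-1\}\subset\cdots\subset[n]$, whose corresponding order is $n(n-1)\cdots 1$, while reading up the right line $R$ gives the reversed order $12\cdots n$; both are flags belonging to ${\bf a}$, so $\cd({\bf a})$ contains a pair of mutually reversed orders and therefore has maximal width. For the \emph{peak-pit} property I would argue locally: fix a triple $a<b<c$ and look only at the sub-arrangement of the three pseudolines $L_a,L_b,L_c$. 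The element $b$ can lie below both $a$ and $c$ (hence be ranked last in a flag) only before its crossing with $a$ and after its crossing with $c$, while it can lie above both only after the crossing with $a$ and before the crossing with $c$. Exactly one of these two windows is nonempty, according to whether the $ab$-vertex precedes or follows the $bc$-vertex, so in the restriction of $\cd({\bf a})$ to $\{a,b,c\}$ the middle-label element $b$ is either never last or never first. Thus every triple satisfies a never-top or never-bottom condition, which is the peak-pit property; in particular, by Sen's value-restriction characterisation, $\cd({\bf a})$ is a Condorcet domain.

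The remaining part of the ``if'' direction, \emph{maximality}, is the first genuinely delicate point. Here I would show that $\cd({\bf a})$ is \emph{triple-determined}: each triple restriction is one of the four-order maximal domains $\cd_{3,i}$ (both endpoints of the nonempty window above are realised), and a linear order is a flag of ${\bf a}$ if and only if each of its triple restrictions is one of the four orders allowed by the corresponding three-line sub-arrangement. Granting this, maximality follows from a counting observation: any order $u\notin\cd({\bf a})$ must restrict, on some triple, to one of the two orders forbidden by that triple's never-condition, and then the combined restriction has five of the six possible orders, for which no element avoids any position. Hence that five-order triple restriction satisfies no never-condition, so $\{u\}\cup\cd({\bf a})$ violates value restriction and cannot be a Condorcet domain. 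No order can be added, and $\cd({\bf a})$ is maximal.

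For the converse, let $\cd$ be a peak-pit maximal Condorcet domain of maximal width; after relabelling we may assume it contains $12\cdots n$ and its reverse. The idea is to reverse the dictionary above. Peak-pit assigns to every triple $a<b<c$ a sign --- never-bottom ($ab$ before $bc$) or never-top ($bc$ before $ab$) --- and this is exactly the chirotope of a rank-three (uniform) oriented matroid on $[n]$. Maximal width together with maximality guarantees that each triple restriction is complete, i.e. one of the four-order domains $\cd_{3,i}$, so that a sign rather than a partial order is recorded; and the fact that $\cd$ is an honest transitive Condorcet domain forces these triple signs to be mutually compatible on every four-element subset. I would then invoke the fact that every rank-three oriented matroid is realisable by pseudolines --- equivalently, its wiring diagram always exists, with no stretchability by straight lines required --- obtaining an arrangement ${\bf a}$ whose triple patterns match those prescribed by $\cd$. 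A final bookkeeping step checks that the flags of ${\bf a}$ are precisely the orders of $\cd$: both families are the maximal chains of the same chamber-set poset and they agree on every triple, so by the triple-determinacy established above they coincide.

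The main obstacle is the interface between the local and the global in the converse. Showing that each triple independently looks like a three-pseudoline picture is routine once peak-pit is in hand; the real work is proving that these local pictures glue into a single globally consistent arrangement. This is where both hypotheses are essential --- maximality (with maximal width) to make every triple restriction complete, so that a genuine sign is available, and the Condorcet transitivity property to enforce the higher-order consistency conditions that upgrade a family of triple signs into an actual oriented matroid. Once this consistency is secured, realizability by pseudolines is automatic, which is precisely why the statement is phrased in terms of arrangements of pseudolines rather than arrangements of straight lines.
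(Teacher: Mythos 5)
First, note that the paper does not actually prove Theorem~\ref{thm:main-thm}: it is stated as a known result and attributed to \cite{DKK:2012} (via rhombus tilings) together with the standard equivalence between rhombus tilings and arrangements of pseudolines. So the relevant comparison is between your outline and what those references establish, and here your proposal has two genuine gaps, both of which you flag yourself but neither of which you close. The first is the ``triple-determinacy'' of $\cd({\bf a})$: the claim that a linear order whose every triple restriction is among the four orders allowed by the corresponding three-line sub-arrangement is itself a flag of ${\bf a}$. Your easy local argument only gives the negative half (each triple restriction \emph{satisfies} a never-condition, hence $\cd({\bf a})$ is a peak-pit Condorcet domain); the positive half --- that every order consistent with all the local never-conditions is realised by an actual maximal chain of chamber sets of ${\bf a}$ --- is precisely the content of the Danilov--Karzanov--Koshevoy theorem and is not routine. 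Without it your maximality argument (``any $u\notin\cd({\bf a})$ must violate some triple's never-condition'') is circular: a priori $u$ could be consistent on every triple and still fail to be a flag, in which case $\cd({\bf a})\cup\{u\}$ would be value-restricted and maximality would fail.

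The second gap is in the converse, where you propose to read off a sign per triple and invoke realisability of rank-$3$ oriented matroids. Two things are missing. (i) It is not shown that the family of signs extracted from a maximal peak-pit domain of maximal width satisfies any consistency axioms at all; deriving this from transitivity of majority aggregation is the hard part of \cite{DKK:2012}, not a remark. (ii) Even granting an oriented matroid, the Folkman--Lawrence theorem produces a projective arrangement of pseudolines, whereas what the theorem requires is a \emph{wiring diagram} with the lines entering $L$ in the order $1,\dots,n$ and leaving $R$ in reverse order, whose chamber sets reproduce $\cd$. The correct consistency condition for that is the signotope condition (on every quadruple $i<j<k<l$ the signs of $ijk$, $ijl$, $ikl$, $jkl$, in this order, change at most once), which is stronger than, and not interchangeable with, the oriented-matroid axioms; your appeal to ``realisability is automatic'' conflates the two. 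Your final bookkeeping step (``flags of ${\bf a}$ equal orders of $\cd$'') then rests again on the unproven triple-determinacy. In short, the architecture of your argument is the right one --- it is essentially the rhombus-tiling/higher-Bruhat proof translated into pseudoline language --- but the two lemmas you ``grant'' are exactly the theorem being cited, so as written the proposal does not constitute a proof.
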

This theorem follows from \cite{DKK:2012} where the representation of peak-pit maximal Condorcet domains with maximal width by rhombus tilings was proved and the equivalency of such a representation to the representation by arrangement of pseudolines (see \cite{elnitsky1997rhombic,felsner2012geometric} for more details. See also Corollary~3.1 in \cite{puppe2024maximal}.

However not all Condorcet domains have maximal width. The most prominent example is the class of Arrow's single-peaked domains (or in some papers locally single-peaked domains) which restrictions on all triples of alternatives are single-peaked but globally they may not be single-peaked. The reason is that the single-peakedness is defined relative to some societal axis which may exist for every triple but not for the whole domain. The structure of Arrow's single-peaked domains was investigated in \cite{slinko2019}.

The smallest maximal Arrow's single-peaked domain without maximal width is given in the following example.
 
\begin{example}
\label{ex:4-strange}
Let us consider a maximal Arrow's single-peaked domain on four alternatives:
\[
\cd_{4,2}= \{1234,\ 2134,\ 2314,\ 3214,\ 2341,\ 3241,\ 2431,\ 4231\},
\]
whose graph is presented on Figure~\ref{fig:4-strange}. %We have $\cd_{4,2}=\cd(\mathcal{N})$, where 
%\[
%\mathcal{N}=\{2N_{\{1,2,3\}}3,\  2N_{\{1,2,4\}}3,\  3N_{\{1,3,4\}}3,\  2N_{\{3,2,4\}}3 \}
%\]
%and $\cd_{4,2}$ is copious.  
This domain is not single-peaked (for example, because it does not have two completely reversed orders).

\begin{figure}[H]
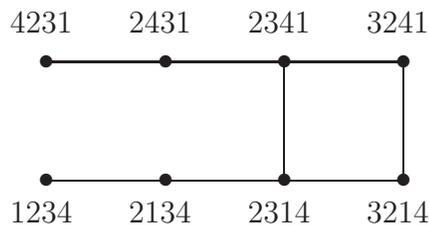

\centering
$$\bpic{-20}{180}{-20}{50}
\bull{0}{0}
\put{$1234$} at -2 -12
\hl{45}{0}{0}
\bull{45}{0}
\put{$2134$} at 43 -12
\put{$2314$} at 88 -12
\put{$3214$} at 133 -12
\bull{135}{45}
\bull{135}{0}
\bull{0}{45}
\bull{45}{45}
\vl{45}{135}{0}
\hl{90}{45}{0}
\bull{90}{0}
\vl{45}{90}{0}
\bull{90}{45}
\hl{45}{90}{45}
\hl{90}{0}{45}
\put{$2431$} at 43 60
\put{$3241$} at 133 60
\put{$2341$} at 88 60
\put{$4231$} at -2 60
\epic$$
\caption{\label{fig:4-strange} Graph of an Arrow's single-peaked domain $\cd_{4,2}$ which is not Black's single-peaked.}
\end{figure}
However, it has two extremal orders $1234$ and $4231$ where the terminal alternatives 1 and 4 are switched.
\end{example}

\iffalse
 Its ideal would be
% \[
  \begin{align*}
I_0&=\{\emptyset \},\\
I_1&=\{\{1\}, \{2\}, {\color{red}\{3\}}, \{4\}\},\\
I_2&=\{\{1,2\},\{2,3\}, {\color{red}\{2,4\}}\},\\
I_3&=\{\{1,2,3\}, \{2, 3,4\}\},\\
I_4&=\{\{1,2,3,4\}.
\end{align*}
%\]
%
We see it is weakly separated but not separated. \par\medskip
\fi

Let us try to represent the domain $\cd_{4,2}$ by arrangement of pseudolines.
The extremal orders of this domain are $1234$ and $4231$. %Thus we set $(i_1,i_2,i_3,i_4)=(1,2,3,4)$ and $(j_1,j_2,j_3,j_4)=(4,2,3,1)$. 
Drawing pseudolines in the classical way, we get the arrangement on Figure~\ref{fig:D42-}. This is not an arrangement of pseudolines in the classical sense since Line$_2$ and Line$_3$ do not intersect. As a result it represents only the subdomain $\cd'$ of $\cd_{4,2}$ on Figure~\ref{fig:D42-}:
\[
\cd'=\{1234, 2134, 2314, 2341, 2431,  4231\}.
\]
\begin{figure}[H]
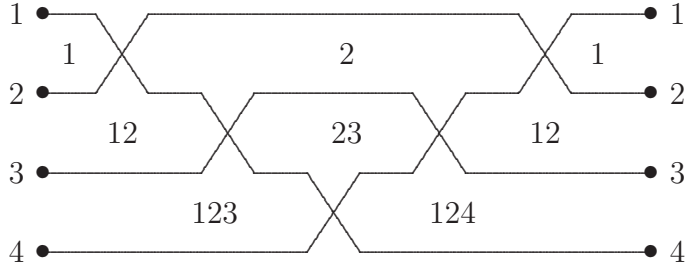

\centering
$$\bpic{-10}{280}{0}{50}
\bull{0}{0}
\bull{0}{30}
\bull{0}{60}
\bull{0}{-30}
\bull{230}{0}
\bull{230}{30}
\bull{230}{60}
\bull{230}{-30}
\setlinear 
\plot 0 60 20 60 40 30 60 30 80 0 100 0 120 -30 230 -30 /
\plot 0 30 20 30 40 60  180 60 200 30 230 30 /
\plot 0  0  60 0 80 30 140 30 160 0 230 0 / %270 60 /
\plot 0 -30 100 -30 120 0 140 0 160 30 180 30 200 60 230 60 /
\put{$4$} at -10 -30
\put{$3$} at -10 0
\put{$2$} at -10 30
\put{$1$} at -10 60
\put{$1$} at 240 60
\put{$2$} at 240 30
\put{$3$} at 240 0
\put{$4$} at 240 -30
\put{$1$} at 10 45
\put{$1$} at 210 45
\put{$2$} at 115 45
\put{$12$} at 30 15
\put{$23$} at 115 15
\put{$12$} at 190 15
\put{$123$} at 65 -15
\put{$124$} at 155 -15
\epic$$
\caption{\label{fig:D42-} Representation of a subdomain of $\cd_{4,2}$.}
\end{figure}

We need to do one trick to represent the whole $\cd_{4,2}$ which I discovered in 2019. Since we must have chamber $\{3\}$ the Line$_3$ must come to the top and hence must intersect Line$_2$. However since the order of 2 and 3 on $L$ and $R$ coincide,  Line$_3$ must intersect Line$_2$ twice.  And the pseudoline representation would be as shown on Figure~\ref{fig:D42}. It is easy to check that flags of such arrangement correspond to $\cd_{4,2}$.

%\begin{figure}[h]
%\begin{center}
%\includegraphics[height=5cm]{Pictures/pseudopseudolines.pdf}
%\end{center}
%\caption{Pseudoline representation of $\cd_{4,2}$.}
%\label{fig:with_double_crossing}
%\end{figure}

\begin{figure}[H]
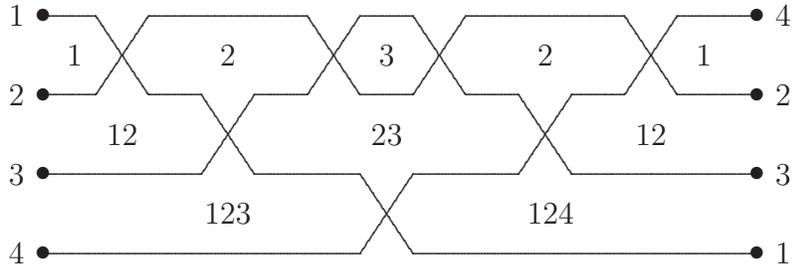

\centering
$$\bpic{-10}{280}{0}{50}
\bull{0}{0}
\bull{0}{30}
\bull{0}{60}
\bull{0}{-30}
\bull{270}{0}
\bull{270}{30}
\bull{270}{60}
\bull{270}{-30}
\setlinear 
\plot 0 60 20 60 40 30 60 30 80 0 120 0 140 -30 270 -30 /
\plot 0 30 20 30 40 60  100 60 120 30 140 30 160 60 220 60 240 30 270 30 /
\plot 0  0  60 0 80 30 100 30 120 60 140 60 160 30 180 30 200 0 230 0 270 0 /
\plot 0 -30 120 -30 140 0 180 0 200 30 220 30 240 60 270 60 /
\put{$4$} at -10 -30
\put{$3$} at -10 0
\put{$2$} at -10 30
\put{$1$} at -10 60
\put{$4$} at 280 60
\put{$2$} at 280 30
\put{$3$} at 280 0
\put{$1$} at 280 -30
%%%%%%%
\put{$1$} at 12 45
\put{$2$} at 70 45
\put{$2$} at 190 45
\put{$1$} at 250 45
\put{$3$} at 130 45
\put{$12$} at 30 15
\put{$23$} at 130 15
\put{$12$} at 230 15
\put{$123$} at 70 -15
\put{$124$} at 192 -15
\epic$$
\label{fig:D42}
\caption{Representation of domain of $\cd_{4,2}$.}
\end{figure}

\section{Main result}

Removing from the definition of arrangement of pseudolines the rquirement that any two lines intersect exactly once, we obtain a more general object which we will call {\em generalised arrangement of pseudolines}.
We ask: under which conditions the set of flags of a generalised arrangement of pseudolines is a Condorcet domain? \par\medskip

As in the classical case  a generalised arrangement of pseudolines ${\bf g}=\{\row Ln\}$ can be viewed as a cell complex $\cc({\bf g})$ with cells of dimensions $0$, $1$, or $2$, namely, the vertices, edges, and faces. The vertices are $L_i\cap L_j$ for $i\ne j$, the edges are connected components of $L_i\setminus \bigcup_{j\ne i} L_j$, $i=1,2,\ldots,n$, and the faces are connected components of $E\setminus \bigcup_{i=1}^n L_i$. Thus  $\cc({\bf g})=\bigcup_{i=0}^2\cc_i({\bf g})$, where $\cc_i({\bf g})$ is the set of cells of dimension $i$. The faces of this complex belonging to $\cc_2({\bf g})$ we will still call {\it chambers}. \index{chamber!} 

The chambers are labeled by subsets of $[n]$, namely, a chamber $C$ gets label $\Lambda(C)=\{\row ik\}$ if lines $L_{i_1},\ldots, L_{i_k}$ and only they go above this chamber. These labels will be also called {\em chamber sets}. \index{chamber!set} Instead of writing labels as sets we will write them as strings, e.g., the aforementioned label will be written as $i_1i_2\ldots i_k$.  

For any set of pseudolines ${\bf g}$ the collection of chamber sets $C_2({\bf g})$ is an ideal and $\cd({\bf g})=\text{Dom}(C_2({\bf g}))$ is a domain of linear orders. We need to find out when $\cd({\bf g})$ is a Condorcet domain. %For that we turn our attention to generalised arrangements of just three pseudolines. 
Firstly, we note that not for every generalised arrangement ${\bf g}$ the domain $\cd({\bf g})$ is a Condorcet domain. 

\begin{example}
\label{ex:non-Condorcet}
Let us consider the set ${\bf g}=\{L_i,L_j,L_k\}$ of three pseudolines on Figure~\ref{fig:three_bad_lines}. The corresponding domain
\[
\cd({\bf g})=\{ijk, jik, kij, kji \}
\]
is obviously not Condorcet.
%\begin{figure}[h]
%\begin{center}
%\includegraphics[height=3.3cm]{Pictures/three_bad_lines.png}
%\end{center}
%\caption{Non-Condorcet domain.}
%\label{fig:three_bad_lines}
%\end{figure}
%
\begin{figure}[H]
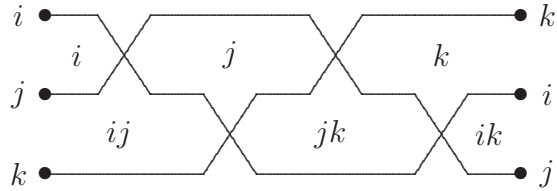

\centering
$$\bpic{-10}{200}{0}{60}
\bull{0}{0}
\bull{0}{30}
\bull{0}{60}
\bull{180}{0}
\bull{180}{30}
\bull{180}{60}
\setlinear 
\plot 0 60 20 60 40 30 60 30 80 0 140 0 160 30 180 30 / 
\plot 0 30 20 30 40 60 100 60 120 30 140 30 160 0 180 0 / 
\plot 0  0  60 0 80 30 100 30 120 60 140 60 180 60 / 
\put{$k$} at -10 0
\put{$j$} at -10 30
\put{$i$} at -10 60
\put{$k$} at 190 60
\put{$i$} at 190 30
\put{$j$} at 190 0
\put{$j$} at 70 45
\put{$k$} at 150 45
\put{$i$} at 12 45
\put{$ij$} at 28 15
\put{$jk$} at 108 15
\put{$ik$} at 168 15
\epic$$
\caption{Non-Condorcet domain.}
\label{fig:three_bad_lines}
\end{figure}
\end{example}
We note that Line$_i$ and Line$_j$ intersect twice at different levels. We will try to turn this observation into a criterion.

\begin{definition}
We call the arrangement of pseudolines {\em tame} if for any two lines, if they intersect more than once, all intersections are on the same level.
\end{definition}

\begin{lemma}
\label{lem:not-CD}
Let ${\bf g}$ be a generalised arrangement of $n$ pseudolines. Suppose there exist $i,j\in [n]$ such that Line$_i$ and Line$_j$ intersect twice at different levels.  Then $\cd({\bf g})$ is not Condorcet.
\end{lemma}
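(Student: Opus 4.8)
The plan is to isolate the three pseudolines that cause trouble and show that they force a forbidden pattern on a triple of alternatives, exactly as in Example~\ref{ex:non-Condorcet}. Suppose Line$_i$ and Line$_j$ cross twice at different levels. Without loss of generality (relabelling if necessary) I would assume the first crossing happens at some level $\ell$ and the second at a strictly higher level $\ell'$, meaning that between the two crossings Line$_i$ and Line$_j$ have migrated upward together past at least one other pseudoline. Because the levels of the two crossings differ, there must be a third pseudoline, say Line$_k$, whose relative vertical position with respect to the pair $\{i,j\}$ changes between the first and the second crossing; otherwise the two crossings would sit at the same level, contradicting the hypothesis.

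The key step is then to read off the restriction of $\cd({\bf g})$ to the triple $\{i,j,k\}$. The restriction of a Condorcet domain to any triple of alternatives must again be a Condorcet domain on three alternatives, i.e.\ it must omit at least one order from each of the two ``cyclic'' triples in the sense that it avoids having the pattern $ijk, jik, kij, kji$ (or any pattern violating the $\alpha$-$\beta$-$\gamma$ conditions). I would show that the double crossing at two different levels produces chambers whose labels, when restricted to $\{i,j,k\}$, yield precisely the four orders $ijk, jik, kij, kji$ obtained in Example~\ref{ex:non-Condorcet}, which is the ``never'' configuration forbidden in every Condorcet domain. Concretely, below the first crossing the chamber labels order $i$ before $j$ while $k$ is positioned above both; as the pair rises past $k$ and crosses back, the labels near the second crossing order $j$ before $i$ with $k$ now below both; collecting the four flags passing through this region reproduces the non-Condorcet triple.

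The main obstacle is the bookkeeping: I must argue carefully that all four offending chamber labels genuinely appear in $\cc_2({\bf g})$ and therefore contribute orders to $\cd({\bf g})$, rather than being somehow ``blocked'' by the behaviour of the remaining $n-3$ pseudolines. To handle this I would restrict attention to the horizontal strip between the two relevant crossings and observe that restricting every chamber label to $\{i,j,k\}$ commutes with taking the induced domain on the triple: the restriction of $\cd({\bf g})$ to $\{i,j,k\}$ contains the domain generated by the three pseudolines Line$_i$, Line$_j$, Line$_k$ alone. Since that three-line sub-configuration is exactly the one in Figure~\ref{fig:three_bad_lines}, its domain is the non-Condorcet set $\{ijk, jik, kij, kji\}$, and a domain whose restriction to a triple is not Condorcet cannot itself be Condorcet. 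This reduces the whole argument to the already-established Example~\ref{ex:non-Condorcet}, and the only genuine work is verifying the existence of the third line Line$_k$ and that the restriction does produce all four orders.
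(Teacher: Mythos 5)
Your argument is essentially identical to the paper's proof: it locates a third line Line$_k$ lying below the higher crossing and above the lower one, restricts ${\bf g}$ to the three lines $\{L_i,L_j,L_k\}$, and recognises the forbidden configuration of Example~\ref{ex:non-Condorcet} producing the four orders $ijk, jik, kij, kji$. The ``bookkeeping'' step you flag---that the restriction of $\cd({\bf g})$ to the triple $\{i,j,k\}$ contains the domain of the three-line sub-arrangement---is exactly the step the paper also asserts without further justification.
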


\begin{proof}
Let $u$ and $v$ be the two vertices at the intersections of Line$_i$ and Line$_j$. Since these two vertices are at different levels - we may assume $u$ is higher than $v$, - there must be a line Line$_k$ which goes  below $u$ but above $v$.
Let us restrict ${\bf g}$ to the subset of lines ${\bf g}'=\{L_i,L_j,L_k\}$. Then we obtain the configuration on Figure~\ref{fig:two vertices at different levels} or a similar configuration with $i$ and $j$ swapped.
\begin{figure}[H]
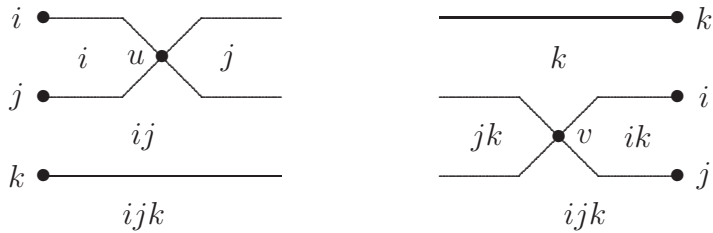

\centering
$$\bpic{-20}{280}{-20}{60}
\bull{0}{0}
\bull{0}{30}
\bull{0}{60}
\bull{240}{0}
\bull{240}{30}
\bull{240}{60}
\setlinear 
\plot 0 30 30 30 60 60 90 60 /
\plot 0 60 30 60 60 30 90 30 /
\bull{45}{45}
\put{$u$} at 35 45
\hl{90}{0}{0}
\put{$k$} at -10 0
\put{$j$} at -10 30
\put{$i$} at -10 60
\hl{90}{150}{60}
\plot 150 0 180 0 210 30 240 30 /
\plot 150 30 180 30 210 0 240 0 /
\bull{195}{15}
\put{$v$} at 205 15
\put{$k$} at 250 60
\put{$i$} at 250 30
\put{$j$} at 250 0
%%%%%%%%%%%%
\put{$i$} at 15 45
\put{$ij$} at 38 15
\put{$j$} at 70 45
\put{$k$} at 195 45
\put{$jk$} at 168 15
\put{$ik$} at 225 15
\put{$ijk$} at 38 -15
\put{$ijk$} at 205 -15
\epic$$
\label{fig:two vertices at different levels}
\caption{Line$_i$ and Line$_j$ intersect at two different levels.}
\end{figure}
Then $\cd({\bf g})$ contains orders $ijk, jik, kij, kji$ which cannot occur in a Condorcet domain. If $i$ and $j$ are swapped on $R$, then this will not affect our reasoning as only the chambers $ik$ and $jk$ will be swapped.
\end{proof}

\begin{theorem}
A generalised arrangement of $n$ pseudolines ${\bf g}$ has domain $\cd({\bf g})$ Condorcet if and only if it is tame. In such a case $\cd({\bf g})$ is peak-pit.
\end{theorem}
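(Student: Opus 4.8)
\emph{Overall shape.} The statement packages three claims: Condorcet $\Rightarrow$ tame, tame $\Rightarrow$ Condorcet, and (in the tame case) peak-pit. The plan is to dispatch the first immediately and to prove the other two together, since peak-pit already implies Condorcet. For the direction Condorcet $\Rightarrow$ tame I would simply take the contrapositive of Lemma~\ref{lem:not-CD}: if ${\bf g}$ is not tame then two lines $L_i,L_j$ cross twice at different levels, and Lemma~\ref{lem:not-CD} gives that $\cd({\bf g})$ is not Condorcet. All the real work is in the converse.

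\emph{Reduction to three lines.} Both properties I want are quantified over triples: a domain is Condorcet iff every restriction to a triple $T=\{i,j,k\}$ is value-restricted, and it is peak-pit iff on every triple some alternative is never first or never last. Crucially, both conditions are inherited by subdomains. So I would first prove a restriction lemma: the restriction to $T$ of any flag-order of ${\bf g}$ is a flag-order of the sub-arrangement ${\bf g}|_T=\{L_i,L_j,L_k\}$, i.e.\ $\cd({\bf g})|_T\subseteq\cd({\bf g}|_T)$. The mechanism is that deleting the other $n-3$ lines only merges chambers, and for a point of a chamber the set of $T$-lines lying above it is unchanged by deletion; hence intersecting a flag of ${\bf g}$ with $T$ and deduplicating yields a chain of chamber sets of ${\bf g}|_T$ of sizes $0,1,2,3$, which is exactly a flag of ${\bf g}|_T$. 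Tameness also passes to ${\bf g}|_T$, because ``same level'' means every other line sits on a fixed side of the two crossing lines, and this is preserved when we retain only $L_k$. By heredity it then suffices to show that every tame three-line arrangement has peak-pit (hence Condorcet) domain.

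\emph{The three-line dictionary.} Labelling the lines $1,2,3$ top-to-bottom on $L$, I would record that an order $xyz$ is a flag-order of ${\bf g}|_T$ exactly when $\{x\}$ and $\{x,y\}$ both occur as chamber sets; equivalently $x$ may be first iff $L_x$ is topmost in some chamber, and $z$ may be last iff $L_z$ is bottommost in some chamber. From this product description one reads off that $\cd({\bf g}|_T)$ is value-restricted iff it is peak-pit iff the six relevant chamber sets (the three singletons and the three pairs) do not all occur: a missing singleton $\{x\}$ makes $x$ never first (a pit), a missing pair makes its complement never last (a peak), and if all six occur then $\cd({\bf g}|_T)$ is all of $S_3$, which is neither Condorcet nor peak-pit.

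\emph{The main obstacle.} What remains, and what I expect to be the only genuinely hard step, is: a \emph{tame} three-line arrangement cannot realise all six chamber sets. Here I would use tameness in the form just isolated --- each pair of lines that crosses does so at a single level, i.e.\ with the third line on one fixed side throughout. This gives a ``side assignment'' to the (at most three) crossing pairs, and realising all six chamber sets forces each line to be topmost somewhere and bottommost somewhere. Running a line through both the very top and the very bottom then compels one pair to cross once with the third line above it and once with it below --- that is, at two different levels --- contradicting tameness. Equivalently, encoding the sweep as a word in $s_1,s_2$, tameness forbids the alternating block $s_1s_2s_1s_2$ up to insertion of same-level double crossings, and a short enumeration of the surviving words (together with the observation that a same-level double crossing merely toggles the two lines it involves while pinning the third to one side, so it never makes that third line extremal) shows that some extremal chamber set is always absent. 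Once this finite case analysis is in place, the reduction yields peak-pitness of $\cd({\bf g})$, and hence its Condorcet property, completing the proof.
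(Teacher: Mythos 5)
Your proposal is correct in substance and reaches the conclusion by a genuinely different route in the hard direction. The easy direction (not tame $\Rightarrow$ not Condorcet, via the contrapositive of Lemma~\ref{lem:not-CD}) coincides with the paper's. For the converse, both arguments reduce to triples of pseudolines, but the paper simply asserts that ``it is sufficient to consider the case when ${\bf g}$ has only three pseudolines'', whereas you actually prove the restriction lemma $\cd({\bf g})|_T\subseteq\cd({\bf g}|_T)$ (deleting lines merges chambers and intersects chamber sets with $T$) together with heredity of tameness; this fills a real gap, though note that heredity requires reading ``same level'' as ``every other line lies on a fixed side of the crossing pair'' rather than merely ``same height'', which is in any case the reading under which the theorem is true. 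In the three-line case the paper enumerates geometric configurations --- pairs crossing once, twice or three times, and the possible positions of Line$_3$ relative to the two crossings $u$ and $v$ --- and reads a never-condition off each picture; the completeness of that enumeration is not self-evident (configurations in which Line$_3$ meets the two lines a different number of times each, or only after $u$ and $v$, are not listed). You instead characterise peak-pitness combinatorially (an order $xyz$ is a flag-order iff $\{x\}$ and $\{x,y\}$ are chamber sets, so the domain fails to be peak-pit exactly when all three singletons and all three pairs occur) and show that tameness forbids realising all six; this bypasses both the configuration enumeration and the paper's separate reduction from three crossings to two. The one step you should write out in full is the last one, since ``running a line through top and bottom'' does not by itself produce the contradiction: if all six chamber sets occur, then each line occupies the top rank somewhere and the bottom rank somewhere; the occupant of the top rank changes only at level-$1$ crossings and the occupant of the bottom rank only at level-$2$ crossings, so at least two of the three pairs must cross at level $1$ and at least two at level $2$, forcing some pair to cross at both levels --- the desired contradiction with tameness. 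With that counting argument inserted, your proof is complete and, arguably, tighter than the paper's.
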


\begin{proof}
If ${\bf g}$ is not tame, by Lemma~\ref{lem:not-CD} $\cd({\bf g})$ is not Condorcet. Suppose it is tame. It is sufficient to consider the case when ${\bf g}$ has only three pseudolines. %It is easy to check  that the statement is true when $w({\bf g})\in {\cal C}$. In particular, $\cd({\bf g}(s_1s_2s_1))$ is copious Condorcet, satisfying $2N_{\{1,2,3\}}3$ and ${\cal P}(s_1s_2s_1)=(1,3)$. 

Let us prove that it is enough to consider generalised arrangements in which any two lines intersect at most two times. Suppose two lines, which without loss of generality can be taken as Line$_1$ and Line$_2$, intersect three times. As ${\bf g}$ is tame, all three intersections occur at the same level, say level 1.
\begin{figure}[H]
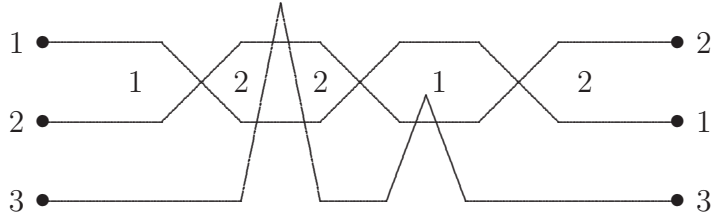

\centering
$$\bpic{-10}{280}{0}{60}
\bull{0}{0}
\bull{0}{30}
\bull{0}{60}
\bull{240}{0}
\bull{240}{30}
\bull{240}{60}
\setlinear 
\plot 0 30 45 30 75 60 105 60 135 30 165 30 195 60 240 60 /
\plot 0 60 45 60 75 30 105 30 135 60 165 60 195 30 240 30 /
\put{$1$} at 35 45
\put{$1$} at 150 45
\put{$2$} at 75 45
\put{$2$} at 105 45
\put{$2$} at 205 45
\plot 0 0 75 0 90 75 105 0 130 0 145 40 160 0 240 0 /
\put{$3$} at -10 0
\put{$2$} at -10 30
\put{$1$} at -10 60
\put{$2$} at 250 60
\put{$1$} at 250 30
\put{$3$} at 250 0
\epic$$
\label{fig:3ints}
\caption{Line$_1$ and Line$_2$ intersect three times. The third intersection can be removed.}
\end{figure}
Let us show that one of the intersections can be removed (straightened). For example let us look at chambers labelled by 2. If within some chamber Line$_3$ intersects both Line$_1$ and Line$_2$ (see Figure~\ref{fig:3ints}), then this chamber must be retained as it adds orders for $\cd({\bf g})$ with 3 on the top (otherwise $\cd({\bf g})$ satisfies $3N_{\{1,2,3\}}1$. Thus we will have orders 123, 213, 321, 231. If only line 3 penetrates the chamber labelled 1, as shown on Figure~\ref{fig:3ints}, then we will immediately have order with 2 is at the bottom and $\cd({\bf g})$ is not Condorcet. If not, this chamber can be removed by removing the double crossing creating it. 

Now let us show that, if no two lines in ${\bf g}$ intersect three times, the $\cd({\bf g})$ is peak-pit. This has already been done in  \cite{li2023classification} but we have a more direct way of showing this. If each pair of lines intersect once, this is the classical case in which we have only two arrangements of pseudolines. One is presented in Figure~\ref{fig:3-example}, the other is flip-isomorphic to it. One satisfies $2N_{\{1,2,3\}}3$ and the other one $2N_{\{1,2,3\}}1$. 

Suppose now that a certain pair of pseudolines intersect twice. Up to an isomorphism and flip-isomorphism we may consider that these two lines are Line$_1$ and Line$_2$ which intersect at vertices $u$ and $v$. If Line$_3$ does not intersect one of them, then ${\bf g}$ satisfies $3N_{\{1,2,3\}}1$. Line$_3$ can intersect Line$_1$ and Line$_2$ once before both $u$ and $v$. We then have generalised arrangement ${\bf g}$ shown on Figure~\ref{fig:one_cros} with $\cd({\bf g})$ satisfying $2N_{\{1,2,3\}}1$. 

The next possibility is for Line$_3$ to intersect both lines Line$_1$ and Line$_2$ twice before $u$ and $v$. We then have the following generalised arrangement ${\bf g}$ shown on Figure~\ref{fig:two_cros_before} with $\cd({\bf g})$ satisfying $1N_{\{1,2,3\}}3$.

\phantom\qedhere

The last possibility is for Line$_3$ to intersect both lines Line$_1$ and Line$_2$ twice between $u$ and $v$. We then have the following generalised arrangement ${\bf g}$ shown on Figure~\ref{fig:two_cros_between} with $\cd({\bf g})$ satisfying $2N_{\{1,2,3\}}3$.

This proves the theorem.
\begin{figure}[H]
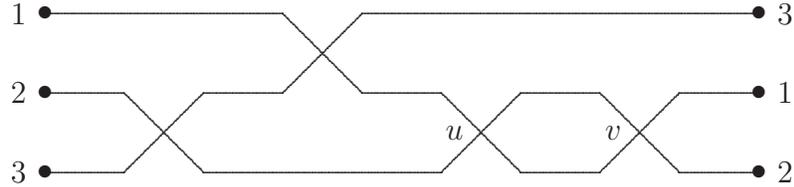

\centering
$$\bpic{-10}{280}{0}{60}
\bull{0}{0}
\bull{0}{30}
\bull{0}{60}
\bull{270}{0}
\bull{270}{30}
\bull{270}{60}
\setlinear 
\plot 0 60 90 60 120 30 150 30 180 0 210 0 240 30 270 30 /
\plot 0 30 30 30 60 0 150 0 180 30 210 30 240 0 270 0 /
\plot 0 0 30 0 60 30 90 30 120 60 270 60 /
\put{$3$} at -10 0
\put{$2$} at -10 30
\put{$1$} at -10 60
\put{$3$} at 280 60
\put{$1$} at 280 30
\put{$2$} at 280 0
\put{$u$} at 155 15
\put{$v$} at 215 15
\epic$$
\label{fig:one_cros}
\caption{Line$_1$ and Line$_2$ intersect two times. Line$_3$ intersects both of them once.}
\end{figure}

\begin{figure}[H]
\centering
$$\bpic{-10}{280}{0}{60}
\bull{0}{0}
\bull{0}{30}
\bull{0}{60}
\bull{270}{0}
\bull{270}{30}
\bull{270}{60}
\setlinear 
\plot 0 60 60 60 80 30 100 30 120 60 140 60 180 60 200 30 220 30 240 60 270 60 /
\plot 0 30 20 30 40 0 140 0 160 30 180 30 200 60 220 60 240 30 270 30 /
\plot 0  0  20 0 40 30 60 30 80 60 100 60 120 30 140 30 160 0 270 0 / %270 60 /
\put{$3$} at -10 0
\put{$2$} at -10 30
\put{$1$} at -10 60
\put{$1$} at 280 60
\put{$2$} at 280 30
\put{$3$} at 280 0
\put{$u$} at 182 45
\put{$v$} at 222 45
\epic$$
\label{fig:two_cros_before}
\caption{Line$_1$ and Line$_2$ intersect two times. Line$_3$ intersects both of them twice before $u$ and $v$.}
\end{figure}

\begin{figure}[H]
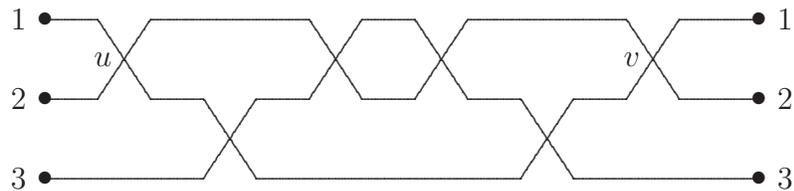

\centering
$$\bpic{-10}{280}{0}{60}
\bull{0}{0}
\bull{0}{30}
\bull{0}{60}
\bull{270}{0}
\bull{270}{30}
\bull{270}{60}
\setlinear 
\plot 0 60 20 60 40 30 60 30 80 0 180 0 200 30 220 30 240 60 270 60 /
\plot 0 30 20 30 40 60 100 60 120 30 140 30 160 60 220 60 240 30 270 30 /
\plot 0  0  60 0 80 30 100 30 120 60 140 60 160 30 180 30 200 0 270 0 / %270 60 /
\put{$3$} at -10 0
\put{$2$} at -10 30
\put{$1$} at -10 60
\put{$1$} at 280 60
\put{$2$} at 280 30
\put{$3$} at 280 0
\put{$u$} at 22 45
\put{$v$} at 222 45
\epic$$
\label{fig:two_cros_between}
\caption{Line$_1$ and Line$_2$ intersect two times. Line$_3$ intersects both of them twice between $u$ and $v$.}
\end{figure}
\end{proof}

\begin{theorem}
Any maximal Arrow's single-peaked Condorcet domain is representable as the domain $\cd({\bf g})$ associated with a generalised arrangement of pseudolines ${\bf g}$.
\end{theorem}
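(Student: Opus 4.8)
The plan is to reduce the statement to the construction of a single tame generalised arrangement and then to build that arrangement by reading its crossing pattern off the never-conditions of the domain. First I would record the structural fact that makes Arrow's single-peakedness special among peak-pit domains: a domain is Arrow's single-peaked exactly when it is peak-pit and \emph{every} triple satisfies a never-bottom condition, i.e. on each triple $\{i,j,k\}$ some alternative is never ranked last (this is precisely single-peakedness of the restriction, the never-last alternative being the middle of the local axis). By the previous theorem it then suffices to produce a tame generalised arrangement ${\bf g}$ with $\cd({\bf g})=\cd$, because tameness automatically yields the Condorcet and peak-pit conclusions.

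Next I would set up the geometry. Relabelling if necessary, I take one extremal order of $\cd$ to be $12\cdots n$, which fixes the order of the pseudolines on $L$; the second extremal order then prescribes the order on $R$, and hence for every pair $\{i,j\}$ whether $i$ and $j$ are reversed between the two extremes. A reversed pair must cross an odd number of times and a non-reversed pair an even number; the minimal tame realisation uses a single crossing for a reversed pair and, for a non-reversed pair, a double crossing exactly when some order of $\cd$ ranks $i$ and $j$ oppositely to the extremes and thereby forces the corresponding chamber (and no crossing otherwise). This is exactly the phenomenon of the $\cd_{4,2}$ example, where only the non-reversed pair $\{2,3\}$ must be doubled in order to create the chamber $\{3\}$.

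The heart of the argument is to show that all these crossings can be placed in one left-to-right sweep so that the chamber sets produced are precisely the initial segments of the orders of $\cd$, and so that the outcome is tame. I would proceed by induction on $n$: choose an alternative $a$ that can be peeled off so that the restriction of $\cd$ to $[n]\setminus\{a\}$ is again a maximal Arrow's single-peaked domain, obtain a tame arrangement ${\bf g}'$ for it by the inductive hypothesis, and re-insert the pseudoline for $a$. Each triple $\{a,i,j\}$ carries a never-bottom condition, and this pins down where Line$_a$ must pass relative to the crossing of Line$_i$ and Line$_j$; the admissible local pictures are exactly the ones produced in the proof of the previous theorem (a single crossing, or a double crossing before, or between, the two vertices $u,v$). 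The crucial point is that, because all conditions are never-bottom rather than a mixture of never-top and never-bottom, these local prescriptions can be met simultaneously by placing each double crossing at one level and nesting it correctly, which is exactly what tameness requires; the all-never-top (single-dipped) case then follows by the flip that reverses every order.

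The step I expect to be the main obstacle is precisely this global consistency: proving that the triple-by-triple prescriptions — single versus double crossing, and the position of each line relative to each existing crossing — can be satisfied by one coherent tame arrangement whose flags are exactly $\cd$. This is the analogue, in the generalised setting, of the realizability result underlying Theorem~\ref{thm:main-thm}; the extra freedom of double crossings is both what makes non-maximal-width domains reachable and what must be kept under control, since by Lemma~\ref{lem:not-CD} a single double crossing placed at two different levels already destroys the Condorcet property. I would discipline the induction using the recursive description of maximal Arrow's single-peaked domains from \cite{slinko2019}, which should guarantee that a peelable alternative $a$ exists at each stage and that its reinsertion interacts tamely with the crossings already present.
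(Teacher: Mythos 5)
Your reduction to constructing a tame generalised arrangement is sound, and your reading of the $\cd_{4,2}$ example (a double crossing for a non-reversed pair exactly when one of its chambers is forced by some order of $\cd$) matches the paper's motivation. But the proposal has a genuine gap, and you name it yourself: the ``global consistency'' step --- that the triple-by-triple never-bottom prescriptions for where Line$_a$ must pass relative to each existing crossing can all be met simultaneously by one tame arrangement whose flags are exactly $\cd$ --- is identified as the main obstacle and then not overcome. Saying you ``would discipline the induction using the recursive description'' of \cite{slinko2019} is a plan, not an argument; nothing in the proposal shows that re-inserting a single pseudoline into an arbitrary tame arrangement for the restricted domain can always be done without either placing some pair's two intersections at different levels (which by Lemma~\ref{lem:not-CD} destroys the Condorcet property) or generating extra flags outside $\cd$.

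The paper closes exactly this hole by choosing a different decomposition. It peels off \emph{both} terminal alternatives $1$ and $n$ at once: the restrictions $\cd_{-1}$ and $\cd_{-n}$ are again maximal Arrow's single-peaked domains on $n-1$ alternatives sharing a common further restriction $\mathcal{E}$, and the orders of $\cd$ split into those bottoming out at $n$ and those bottoming out at $1$, with a single switch of $1$ and $n$ separating the two halves, as in display~\eqref{relay}. The arrangement ${\bf g}$ is then the horizontal concatenation of the inductively obtained ${\bf g}_{-n}$ and ${\bf g}_{-1}$, with Line$_n$ running trivially along the bottom of the first block, Line$_1$ along the bottom of the second, and a single crossing of these two lines in between. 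Because each newly added line stays below its block and never interacts with the crossings inside it, no placement or tameness question ever arises --- which is precisely what your one-alternative-at-a-time re-insertion cannot guarantee. To salvage your route you would have to prove the realizability statement you flag as the obstacle; the paper's two-terminal concatenation is the device that makes that proof unnecessary.
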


\begin{proof}
Let $\cd\subseteq \cl([n])$ be an Arrow's single-peaked domain. We know that $\cd$ has two terminal alternatives,  which can be taken to be 1 and $n$, and two extremal linear orders, where terminal alternatives occupy the top and bottom positions.  

We know that $\cd_{-1}$ and $\cd_{-n}$ which are the restrictions of $\cd$ onto $[n-1]$ and $[n]\setminus \{1\}$, are two maximal Arrow's single-peaked domains on $n-1$ alternatives. Moreover $\mathcal{E}=(\cd_{-1})_{-n}=(\cd_{-n})_{-1}$ is a maximal Condorcet domain on $[n]\setminus \{1,n\}$ common for $\cd_{-1}$ and $\cd_{-n}$. We note that to move from $\cd_{-1}$ to $\cd_{-n}$ we need only one switch of 1 and $n$.

\begin{equation}
\label{relay}
\left[\begin{array}{ccccccc|ccccccc}
1&\ &\ &&\LL{}&&& &&&\LL{}&\ &\ &\  n \\
&\ &\ &&\LL{}&&&&& &\LL{}&\ &\ &\  \\
&\ &\ &&\LL{}&\mathcal{E}&\ &&\ \mathcal{E}&&\LL{}&&\ &\  \\
&\ &\ &&\LL{}&&&&&& \LL{}&\ &\ &\ \\
&\ &\ &&\LL{}&&&&&& \LL{}&\ &\ &\ \\
   \cline{5-10}
&\ &\ &&\LL{1}&\cdots&1& n&\cdots&n&\LL{}&\ \\
  \cline{1-14}
n&&&&\cdots&& n&1 & &\cdots&&& &1 
\end{array}\right].
\end{equation}

By the induction hypothesis each of $\cd_{-1}$ and $\cd_{-n}$ can be represented as $\cd({\bf g}_{-1})$ and $\cd({\bf g}_{-n})$, respectively, on sets of pseudolines ${\bf g}_{-1}$ and ${\bf g}_{-n}$, respectively. Now we create ${\bf g}$ representing $\cd$ as shown on Figure~\ref{fig:g-1+g-n}. 
\begin{figure}[H]
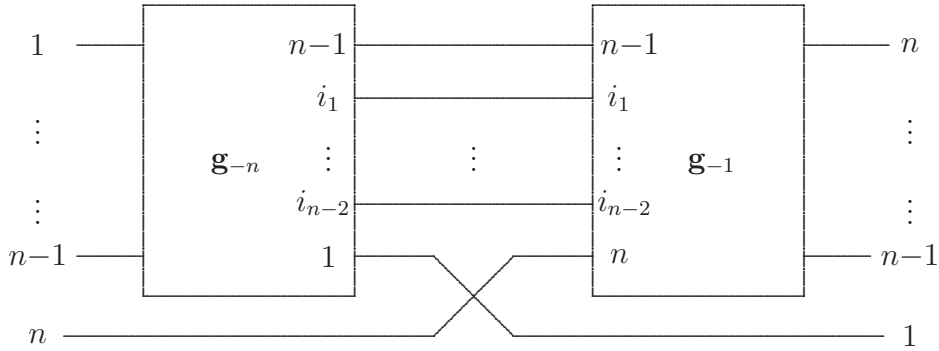

\centering
$$\bpic{-20}{310}{0}{80}
\setlinear 
\plot 0 0 140 0 170 30 200 30 / 
\plot 110 30 140 30 170 0 310 0 / 
\put{$\vdots$} at 155 70
\plot 110 90 200 90 / 
\put{$i_1$} at 100 90
\put{$\vdots$} at 100 70
\put{$i_{n{-}2}$} at 98 50
\put{$i_1$} at 210 90
\put{$\vdots$} at 210 70
\put{$i_{n{-}2}$} at 212 50
\put{$n{-}1$} at 214 110
\plot 110 50 200 50 / 
\plot 110 110 200 110 / 
\put{$n$} at -10 0
\put{$n{-}1$} at -10 30
\put{$\vdots$} at -10 50
\put{$\vdots$} at -10 80
\put{$1$} at -10 110
\put{$1$} at 320 0
\put{$n{-}1$} at 320 30
\put{$\vdots$} at 320 50
\put{$\vdots$} at 320 80
\put{$n$} at 320 110
\put{$1$} at 100 30
\put{$n$} at 210 30
\put{$n{-}1$} at 96 110
\plot 5 30 30 30 /
\put{${\bf g}_{-n}$} at 65 65
\put{${\bf g}_{-1}$} at 245 65
\plot 5 110 30 110 /
\plot 30 15 110 15 110 125 30 125 30 15 / 
\plot 200 15 280 15 280 125  200 125 200 15 / 
\plot 280 110 312 110 /
\plot 280 30 305 30 /
\epic$$
\caption{\label{fig:g-1+g-n} Method of combining ${\bf g}_{-1}$ and ${\bf g}_{-n}$.}
\end{figure}
Line$_n$ is added to ${\bf g}_{-n}$ trivially staying all the time at the bottom of the arrangement---this reflects $n$ being the bottom alternative in \eqref{relay}. Similarly Line$_1$ is added to ${\bf g}_{-1}$. Then these lines swap before entering ${\bf g}_{-1}$ and ${\bf g}_{-n}$, respectively.
\end{proof}

\section{Relation to the literature}

This paper primarily related to the paper by \cite{li2023classification} which unfortunately has several fatal mistakes to which \cite{Puppe-Slinko24} pointed out. We have certain differences in definitions with that papers too. Firstly, they have the narrower concept of generalised arrangements of pseudolines restricting the number of intersections for any two pseudoloines to at most two. They showed in Example~4, however, that this concept does not work as intended demonstrating that one Arrow's single-peaked Condorcet domain on five alternatives cannot be represented.  Due to this they had to modify the definition of a flag making it more general. In that paper they claim to have proved that  with the modified concept of a flag any peak-pit domain (not only Arrow's single-peaked) is representable by a generalised arrangement of pseudolines.

\bibliographystyle{plainnat}
\bibliography{cps}
\end{document}